\DeclareSymbolFont{AMSb}{U}{msb}{m}{n}
\def\P{\mathbb P}
\def\N{\mathbb N}
\def\R{\mathbb R}
\def\C{\mathcal C}
\def\F{\mathcal F}
\def\E{\mathbb E}
\def\1{\,{\makebox[0pt][c]{\normalfont    1}
\makebox[2.5pt][c]{\raisebox{3.5pt}{\tiny {$\|$}}}
\makebox[-2.5pt][c]{\raisebox{1.7pt}{\tiny {$\|$}}}
\makebox[2.5pt][c]{} }}
\def\rR{r_R^{\phantom m}}
\def\for{\mbox{ for }}
\def\Pas{{\P\mbox{-a.s.}}}
\def\ol{\overline}
\def\dd{{\,\mathrm{d} }}
\def\ee{{\mathrm{e} }}
\def\bbox{{\hfill $\Box$}}
\newcommand{\norm}[1]{\left\| #1 \right\|}
\newcommand{\bignorm}[1]{\bigl\| #1 \bigr\|}
\newcommand{\mnorm}[1]{%
  \left\vert\kern-0.9pt\left\vert\kern-0.9pt\left\vert #1
    \right\vert\kern-0.9pt\right\vert\kern-0.9pt\right\vert
}
\newcommand{\bigmnorm}[1]{%
  \bigl\vert\kern-0.9pt\bigl\vert\kern-0.9pt\bigl\vert #1
    \bigr\vert\kern-0.9pt\bigr \vert\kern-0.9pt\bigr \vert
}
\newcommand{\smallfrac}[2]{ {\mbox{$\frac {#1}{#2}$}} }
\newtheorem{theorem}{Theorem}[section]
\newtheorem{lemma}[theorem]{Lemma}
\newtheorem{definition}[theorem]{Definition}
\newtheorem{remark}[theorem]{Remark}
\begin{document}

\title{{\Large  Existence and uniqueness of solutions of stochastic functional differential equations\\
}}
\author{ Max-K. von Renesse and Michael Scheutzow
\def\thefootnote{} \footnote{Technische Universit\"at Berlin, email: [mrenesse,ms]\@@math.tu-berlin.de}
}

\maketitle \abstract{Using a variant of the Euler-Murayama scheme for  stochastic functional differential equations
with bounded memory driven by Brownian motion we show that only weak  one-sided local Lipschitz (or 'monotonicity') conditions are
sufficient for local existence and uniqueness of strong solutions. In case of explosion the method  yields the maximal solution up to the explosion time. We also provide a weak growth condition which prevents explosions to occur. 
In an appendix we formulate and prove four lemmas which may be of independent interest: three of them
can be viewed as rather general stochastic versions of Gronwall's Lemma, the final one provides tail bounds for H\"older norms of stochastic integrals.} ~\\

\noindent {\bf AMS 2000 Subject Classification:} Primary 34K50; secondary 34K05, 60H10, 60H20.\\

\noindent {\bf Keywords:} Stochastic functional differential equation, existence of solution, maximal solution, uniqueness of solution,
Dereich lemma, stochastic Gronwall lemma.


\vspace{.5cm}

\section{Introduction}
There is by now a rather comprehensive mathematical literature on the mathematical theory and on applications of
stochastic functional (or delay) differential equations driven by Brownian motion. Existence and uniqueness of
global solutions have been established under global Lipschitz conditions
on the coefficients (e.g.\ \cite{Mohammed84}) or under local Lipschitz and linear growth conditions (e.g.\ \cite{Mao97, Xu08}).
On the other hand it is common knowledge for non-delay (stochastic) differential equations that only one sided Lipschitz conditions are
sufficient for local existence of solutions. This distinction becomes particularly relevant in infinite dimensions where the drift in (stochastic) evolution equations is unbounded and discontinuous in almost all interesting cases but nevertheless satisfies a one-sided Lipschitz i.e. 'monotonicity'/'dissipativity' condition, cf. e.g.\  \cite{PR07}. In this paper we show that monotonicity of the coefficients guarantees local existence of solutions to delay equations
 with bounded memory, thereby closing a systematic gap in the existing literature. \\
\smallskip

We choose the classical framework of the space of continuous
functions as a natural state space of the equation. Note that, due to the absence of an inner product on this space, 
the right formulation of monotonicity is not obvious in this case.  The  proposed condition \eqref{mono} below fits well to our
needs, since it recovers the classical monotonicity condition for the
non-delay case as a limit and yet is weak enough to cover  a rather big set of equations.  \smallskip

In our proof we define a specific Euler-Murayama scheme, which is generally a very
powerful tool in the Markovian case \cite{Aly87,K90,K99}. Other variants have been treated for the numerical simulation of stochastic delay equations under Lipschitz 
conditions in  e.g.\ \cite{cle06,ku00, klo07} and most recently \cite{bu08}. We point out that our method yields an approximation in the strong sense even in the case of  an explosion. In particular our proof below shows how the explosion time can be recovered  numerically, which seems to be a question typically neglected in the literature. 

\smallskip 
As for the proofs, note that the left hand side of condition  \eqref{mono} is quite weak w.r.t. the $C^0$-norm. As a consequence 
the standard two-step Burkholder-Davis-Gundy and Gronwall argument cannot be applied 
 to obtain the crucial contraction estimates. We overcome this difficulty by what we
call stochastic Gronwall lemmas and which are presented in the appendix. We
think that they may be of independent interest. These lemmas are
also crucial for the global existence assertion which holds under a
rather familiar growth (or 'coercivity',  \cite{PR07}) condition \eqref{coerc}, which is  again weak in  the $C^0$-topology.


\section{Set Up and Main Results}
 
For  $r>0$, let $\C$ denote the space of continuous $\R^d$-valued functions
on $[-r,0]$ endowed with the sup-norm $\|.\|$. For a function or a
process $X$ defined on $[t-r,t]$ we write $X_t(s):=X(t+s)$, $s \in
[-r,0]$.  Consider the stochastic functional differential equation
\begin{equation}
\label{sfde}
\left\{
\begin{aligned}
\dd X(t) & = f(X_t)\,\dd t + g(X_t) \dd W(t),\\
X_0& = \varphi,\\
 \end{aligned}
\right.
\end{equation}
where $W$ is an $\R^m$-valued Brownian motion  defined on a complete probability space $(\Omega,\F,\P)$ endowed with the augmented Brownian
filtration $\F^W_t = \sigma\left(W(u), {0\leq u \leq t} \right)\vee \mathcal N\subset \F$, where $\mathcal N$ denotes the null-sets in
$\F$, $\varphi$ is an $(\F^W_t)$-independent  $\C$-valued random variable and   $f:\C \to \R^d$, $g: \C \to \R^{d \times m}$ are continuous maps. \\

\noindent 
We will suppose throughout this work the following monotonicity assumption on  $f$ and $g$.
\begin{equation}\label{mono} \tag{M}
 \begin{minipage}{12cm}
For each compact subset $C \subset \C$, there exists a number $K_C$ and some $r_C \in ]0,r]$ such that for all $x,y \in C$ with $x(s)=y(s)\, \forall \, s \in [-r,-r_C]$
\[2\,\langle f(x)-f(y),x(0)-y(0)\rangle  + \mnorm{ g(x)-g(y)}^2 \le K_C\,\norm{x-y} ^2,\]
 \end{minipage}
\end{equation}
where    $\langle .,.\rangle$ denotes the standard inner product on $\R^d$ and  $\mnorm{M}^2 = \mathrm{tr} (M M^*)$ for $M \in \R^{d\times m}$. \\

\noindent As an example in $d=1$ take  $f(x) = \varphi(\sum_{i=1}^N w_i \, x(t_i))$, where $t_i \in [-r,0], w_i \geq 0$,  $i=1, \dots, N$  and $\varphi \in C(\R)$ is a non-increasing continuous (not necessarily Lipschitz) function, e.g. $\varphi(s) = -{\rm sign} (s) \sqrt{ |s|}$ and $g$ locally Lipschitz on $\C$. Another example is  $f= f_1 +f_2 +f_3$ with    $f_1$ locally Lipschitz on $\C$, $f_2(x) = \int_{-r}^{-r_0} \psi(x(s)) k(s) ds$ for some $0<r_0<r$, $k, \psi \in C(\R)$  and $f_3(x)= \varphi(x(0))$ with $\varphi \in C(\R)$ non-increasing as above. \\

Our first result is a local existence and uniqueness statement for solutions to (\ref{sfde}) for which we recall some basic notions. Given any filtration $(\F_t)$ on $\Omega$, an $(\F_t)$-stopping time   $\sigma: \Omega \to \overline  \R_{\geq 0}$ is called {\em predictable} if there exists a sequence of (`announcing') stopping times $\sigma_n$ such that $\sigma_n < \sigma$ and $\sigma_n \nearrow \sigma $  $\P$-almost surely.
A tuple $X=(X,\sigma)$ of
a predictable stopping time $\sigma$ and a map $X:\Omega \times ( [-r,0]\cup [0,\sigma[ )\to \R^d$
is called a {\em local $(\F_t)$-semimartingale up to time $\sigma$ starting  from
$\varphi \in \C$}, if $X_0 =\varphi$ holds  $\P$-almost surely and for any (announcing)
stopping time $\sigma_n < \sigma$, the process $(X^{\sigma_n}(t))_{t\geq 0}$ with $X^{\sigma_n}(t)=X({t\wedge \sigma_n})$ is an $\R^d$-valued $(\F_t)$-adapted semimartingale.
\begin{definition}[Local Solution] \label{solutiondef}\textit{Let $\F_t= \F_t^W \vee \sigma(\varphi)$.  A local $(\F_t)$-semimartingale $(X,\sigma)$ up to a
predictable stopping time $\sigma$  is called a {\em local strong solution} to equation (\ref{sfde}) if $X_0 =\varphi$ and for any
stopping time $\sigma_n < \sigma$ and any $t \geq 0$
\[ X({t\wedge\sigma_n}) = X(0) + \int_{0} ^{t\wedge \sigma_n} f(X_u) \dd u + \int_{0}^{t\wedge \sigma_n} g(X_u)\dd W(u)  \quad \Pas
\]
The pair $(X,\sigma)$ is called {\em maximal strong solution} if  in addition $(X_t)$ eventually leaves  any compact set $K \subset \C$ for $t \to \sigma$,  $\P\mbox{-almost surely on } \{\sigma < \infty \}$; i.e.\
$$
\P \left(\{\exists \mbox{ a compact set } K \subset \C \mbox{ and } t_i \nearrow \sigma \mbox{ s.t. } X_{t_i} \in K\} \cap  \{\sigma < \infty \}\right )=0.
$$}
\end{definition}

\begin{theorem} \label{local} \textit{Equation \eqref{sfde} admits a unique maximal strong solution $(X,\sigma)$ provided \eqref{mono} holds.
}
\end{theorem}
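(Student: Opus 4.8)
The plan is to construct the solution by a localized Euler--Murayama scheme and then exhaust $\C$ by an increasing sequence of compact sets. Fix a compact $C\subset\C$ with constant $K_C$ and delay $r_C$ from \eqref{mono}, and let $\sigma_C$ denote the first time the segment $X_t$ leaves $C$ (or a fixed compact neighbourhood of it). I would first build and identify a unique solution up to $\sigma_C$, and only afterwards paste these local solutions together. A preliminary point is that, up to $\sigma_C$, any Euler approximation is of the form ``bounded drift $+$ stochastic integral with bounded integrand'', so its paths are H\"older continuous with a modulus controlled by $W$; by Arzel\`a--Ascoli one can then confine the approximating segments to a fixed compact, equicontinuous set with probability close to one. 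This confinement is essential, because \eqref{mono} only provides the bound $K_C$ on compacts, and it is precisely the appendix tail bound for H\"older norms of stochastic integrals that makes it quantitative.

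\textbf{Euler scheme and the key inequality.} Define $X^n$ by freezing the coefficients on each mesh interval at the segment $X^n_{\kappa_n(u)}$, where $\kappa_n(u)$ is the last grid point $\le u$. To prove that $(X^n)$ is Cauchy I would set $Z=X^n-X^m$ and apply It\^o's formula to $|Z|^2$. The drift-plus-quadratic-variation integrand is, after replacing the frozen time-shifts and increments by their current values (the errors being controlled by the boundedness of $f,g$ and the equicontinuity from the previous step), of the form $2\langle f(x)-f(y),x(0)-y(0)\rangle+\mnorm{g(x)-g(y)}^2$ for segments $x,y$ ending at a common grid time. On a first block $[0,r_C]$ these agree on $[-r,-r_C]$ because that part still comes from $\varphi$, so \eqref{mono} bounds the integrand by $K_C\norm{x-y}^2$; one then moves to the next block, the mismatch of the two old parts being of a size already controlled on the previous block (uniform continuity of $f,g$ on $C$ turning it into a vanishing error). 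This yields an inequality of the form $|Z(t)|^2 \le \varepsilon_{n,m}+K_C\int_0^t \sup_{s\le u}|Z(s)|^2\,\dd u + M(t)$, with $M$ a continuous local martingale, $M(0)=0$, and $\varepsilon_{n,m}\to0$.

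\textbf{Stochastic Gronwall.} This is the crucial and non-standard step. The quadratic variation of $M$ involves $\mnorm{g(x)-g(y)}^2$, which \eqref{mono} does \emph{not} bound on its own --- only the monotone combination with the drift is controlled --- so the Burkholder--Davis--Gundy estimate is unavailable and the classical two-step argument breaks down. Instead I would invoke the stochastic Gronwall lemma of the appendix, which from an inequality $V(t)\le \varepsilon+K_C\int_0^t V^*(u)\,\dd u+M(t)$ (with $V^*$ the running supremum of $V=|Z|^2$ and $M$ only a local martingale) yields a bound $\E\big[(\sup_{[0,T\wedge\sigma_C]}|Z|^2)^p\big]\lesssim \varepsilon_{n,m}^{\,p}\,e^{pK_CT}$ for some $p\in(0,1)$, using \emph{no} information about $[M]$. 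Hence $(X^n)$ converges; its limit solves \eqref{sfde} up to $\sigma_C$ by passing to the limit using the continuity of $f,g$ and the convergence of the segments. Uniqueness up to $\sigma_C$ is the same estimate with $\varepsilon=0$, which forces $\sup|X-Y|^2=0$.

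\textbf{Maximal solution and the main obstacle.} Finally, let $C_k\uparrow\C$ exhaust $\C$ by compacts and put $\sigma_k=\sigma_{C_k}$; uniqueness makes the local pieces consistent, so they paste into a solution up to $\sigma:=\lim_k\sigma_k$, a predictable time announced by the $\sigma_k$. On $\{\sigma<\infty\}$, were the segment to return to some fixed compact $K$ along a sequence $t_i\nearrow\sigma$, one could restart the construction there and continue strictly beyond $\sigma$, contradicting maximality; this gives exactly the compact-exit property in Definition \ref{solutiondef}. I expect the genuine difficulty to lie in Steps~2--3: keeping $f$ and $g$ coupled so that only the monotone combination of \eqref{mono} is ever used (never $\mnorm{g(x)-g(y)}^2$ alone), organising the freezing and old-part errors so that they do not reintroduce an uncontrolled diffusion term, and then replacing BDG by the stochastic Gronwall lemma.
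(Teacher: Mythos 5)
Your local construction matches the paper's: the frozen-segment Euler scheme, It\^o's formula on $|X^n-X^m|^2$, the observation that on a time interval of length $r_C$ the two segments agree on $[-r,-r_C]$ because that part is still $\varphi$ (so that \eqref{mono} applies), the replacement of the BDG--Gronwall two-step by the stochastic Gronwall Lemma \ref{thelem}, and the use of Lemma \ref{dereichlem} to confine the segments to a compact built from H\"older balls. Up to the first block this is essentially the paper's Lemma \ref{verylocal}.

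The globalization and maximality step, however, contains a genuine gap. First, $\C$ is an infinite-dimensional Banach space and is not $\sigma$-compact, so ``let $C_k\uparrow\C$ exhaust $\C$ by compacts'' is not available; the paper instead works with the specific compacts $C_{\Phi,R}$ and enforces membership by stopping when $\norm{X(.)-\varphi(0)}_{1/4;[0,t]}$ reaches $R/2$. Second, and more seriously, the constant $r_C$ in \eqref{mono} shrinks as the compact grows, and each application of the local lemma advances time by \emph{at most} $r_{R^{(k)}}$; without the paper's choice of $R^{(k)}\nearrow\infty$ with $\sum_k r_{R^{(k)}}=\infty$ the iteration can stall at a finite time even though nothing explodes. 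Your restart argument does not repair this: restarting from $X_{t_i}\in K$ yields an extension whose length is a stopping time $\sigma_R\le r_{K,R}$ with no deterministic positive lower bound (it can be cut short by rapid growth of the H\"older norm), so ``continue strictly beyond $\sigma$'' is unjustified --- and since maximality is what is being proved, invoking it for the contradiction is circular. The paper's actual mechanism is the quantitative dichotomy \eqref{mindestens}: each block either lasts the full deterministic time $r_{R^{(k)}}$ or contributes H\"older-norm growth at least $R^{(k)}/2$; on $\{\sigma<\infty\}$ the divergence of $\sum_k r_{R^{(k)}}$ forces the second alternative infinitely often, hence $\norm{X(.)}_{1/4;[0,s]}\to\infty$, which by Lemma \ref{dereichlem} is incompatible with $\sup_t(|f(X_t)|\vee\mnorm{g(X_t)})<\infty$; this is what shows the segments leave every compact. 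A secondary issue: your plan to continue the Euler Cauchy estimate across several blocks requires applying \eqref{mono} to segments whose ``old parts'' only \emph{approximately} agree, whereas \eqref{mono} demands exact equality on $[-r,-r_C]$; making your uniform-continuity correction rigorous needs an explicit decomposition, and the paper avoids it altogether by restarting the exact (already constructed) solution at the beginning of each new block.
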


\begin{theorem} \label{global} \textit{In addition to the assumptions of Theorem \ref{local} let  $f$ and $g$ be  bounded on bounded subsets of $\C$ and let the pair $(f,g)$ be weakly coercive in the sense that there exists a non-decreasing function $\rho: [0,\infty[ \to ]0,\infty[$ such that
$\int_0^{\infty} 1/\rho(u) \dd u = \infty$ and for all $x\in \C$
\begin{equation} \label{coerc}\tag{C}
2\,\langle f(x),x(0)\rangle  + \mnorm{g(x)}^2 \le \rho (\norm{x}^2).
\end{equation}
Then $X$ is globally defined, i.e.\ $\sigma  = \infty$ $\P$-almost surely.}
\end{theorem}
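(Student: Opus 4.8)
The plan is to fix an arbitrary $T>0$ and show that $\P(\sigma\le T)=0$; since $T$ is arbitrary this yields $\sigma=\infty$ $\Pas$. Let $(X,\sigma)$ be the maximal solution provided by Theorem~\ref{local}, let $(\sigma_k)$ be a sequence of announcing stopping times for $\sigma$, and for $n\in\N$ put $\tau_n:=\inf\{t\ge 0:|X(t)|\ge n\}$ (with the convention $\tau_n=\sigma$ if the level $n$ is never reached on $[0,\sigma[$). The first point to establish is that on $\{\sigma<\infty\}$ one has $\sup_{0\le t<\sigma}|X(t)|=\infty$, so that $\tau_n\nearrow\sigma$ with $|X(\tau_n)|=n$. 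Here I would use the new hypothesis that $f,g$ are bounded on bounded sets: on the event where the path stayed bounded by some $m$ up to $\sigma$, the integrands $f(X_u),g(X_u)$ would be bounded, so the drift part of $X$ is Lipschitz in $t$ while the martingale part $t\mapsto\int_0^t g(X_u)\dd W(u)$ has bounded quadratic variation and hence (by the martingale convergence theorem, or the H\"older-norm tail bound of the appendix) extends continuously to $t=\sigma$. Then $t\mapsto X_t$ would be uniformly continuous into $\C$, so $\{X_t:0\le t<\sigma\}$ would be relatively compact by Arzel\`a--Ascoli, contradicting maximality. Hence it suffices to prove $\lim_{n\to\infty}\P(\tau_n\le T)=0$.

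To estimate $\P(\tau_n\le T)$ I would apply It\^o's formula to $|X(t)|^2$ stopped at $\theta:=\tau_n\wedge\sigma_k$, where every process is a genuine (square-integrable) martingale and the coefficients are bounded:
\[ |X(t\wedge\theta)|^2=|\varphi(0)|^2+\int_0^{t\wedge\theta}\Big(2\langle f(X_u),X(u)\rangle+\mnorm{g(X_u)}^2\Big)\dd u+M(t\wedge\theta),\qquad M(t):=2\int_0^t\langle X(u),g(X_u)\dd W(u)\rangle. \]
By the coercivity condition \eqref{coerc} the drift integrand is bounded by $\rho(\norm{X_u}^2)$. Writing $Z(t):=|X(t)|^2$, $Z^*(t):=\norm\varphi^2\vee\sup_{0\le s\le t}Z(s)$, and using $\norm{X_u}^2\le Z^*(u)$ together with the monotonicity of $\rho$, this yields the running-maximum inequality
\[ Z(t\wedge\theta)\le \norm\varphi^2+\int_0^{t\wedge\theta}\rho\big(Z^*(u)\big)\dd u+M(t\wedge\theta), \]
in which $M$ is a local martingale vanishing at $0$; equivalently, $Z(t)-\int_0^t\rho(Z^*(u))\dd u$ is dominated by a local martingale.

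The final and main step is to feed this inequality into a stochastic Gronwall lemma of the appendix. The reason the classical route fails here is that \eqref{coerc} only controls the \emph{combination} $2\langle f,x(0)\rangle+\mnorm{g}^2$; in particular $\dd\langle M\rangle_u\le 4|X(u)|^2\mnorm{g(X_u)}^2\dd u$ is \emph{not} bounded by $\rho(Z^*(u))\dd u$ uniformly in $n$, so a Burkholder--Davis--Gundy estimate of $\E\sup_{s\le t}M(s)$ followed by Gronwall would lose all $n$-uniformity. Instead, the stochastic Gronwall lemmas exploit directly the supermartingale structure above and produce a tail bound for $Z^*(T\wedge\theta)$ depending only on $\norm\varphi$, on $T$ and on $\rho$, but not on the size of $\langle M\rangle$. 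Combining this with the Osgood property $\int_0^\infty\dd u/\rho(u)=\infty$ --- which guarantees that the comparison ODE $\dot v=\rho(v)$, $v(0)=\norm\varphi^2$, has a finite value $v(T)$, so that $Z^*$ cannot reach arbitrarily high levels before time $T$ --- I would obtain $\sup_k\P\big(Z^*(T\wedge\tau_n\wedge\sigma_k)\ge n^2\big)\to0$ as $n\to\infty$. Since $\{\tau_n\le T\}\subseteq\{Z^*(T\wedge\tau_n)\ge n^2\}$ and $\sigma_k\nearrow\sigma$, letting $k\to\infty$ and then $n\to\infty$ gives $\P(\tau_n\le T)\to0$, as required. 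I expect this last step --- matching the nonlinear, merely integrable growth rate $\rho$ against the martingale fluctuations so that the deterministic Osgood/Bihari non-explosion survives the stochastic perturbation --- to be the genuine obstacle; it is precisely what the stochastic Gronwall lemmas are designed to overcome.
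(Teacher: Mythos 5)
Your proposal is correct and follows essentially the same route as the paper: It\^o's formula applied to $|X(t)|^2$, the coercivity bound \eqref{coerc} to dominate the drift by $\rho(Z^*(u))$, and the stochastic Gronwall Lemma \ref{theseclem}, whose hypothesis $\lim_{t\uparrow\sigma}Z^*(t)=\infty$ on $\{\sigma<\infty\}$ is exactly what your first step (like the paper's appeal to \eqref{Sigma} and the boundedness of $f,g$ on bounded sets) supplies. The only cosmetic difference is that you phrase the conclusion as a tail bound $\P(\tau_n\le T)\to 0$ rather than invoking the lemma's non-explosion conclusion directly.
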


\section{Proof of Theorem \ref{local}}

The proof of Theorem \ref{local} is based on an iteration of Lemma \ref{verylocal} below, which requires some auxiliary notation.
For $\Phi \subset \C$ and $R>0$ let
\[ C_{\Phi,R} = \{ \eta \in  \C | \,\exists\,   \varphi \in \Phi, r_0 \in [0,r]:  \eta(u) = \varphi(u+r_0), u\in ]-r,-r_0], \norm{\eta-\varphi(0)}_{1/4;[-r_0,0]}\leq R\} \subset \C, \]
where
\[\norm{\eta}_{\alpha;[a,b]}= \sup_{a\leq u<v\leq b} \bigl(|\eta(v)-\eta(u)|/(v-u)^{\alpha}\bigr)+\sup_{a\leq u \leq b}  |\eta(u) | \]
denotes the H\"older-$\alpha$-norm on $C([a,b],\R^d)$, $\alpha \in (0,1)$.
Note that $C_{\Phi,R}$ is compact in $\C$ provided $\Phi$ is. \\

Below we  drop the subscript $\Phi$ whenever this causes no confusion.

\begin{lemma}\label{verylocal}\textit{In addition to the conditions of Theorem \ref{local} assume there is a compact subset $\Phi \subset \C$ such that $\varphi \in \Phi$ $\P$-almost  surely.  For $R>0$, let $\rR=r_C$ be the constant appearing in \eqref{mono} for choosing $C=C_{\Phi,R}$. Then there exists a stopping time $0<\sigma_R\leq \rR$ and a unique (up to indistinguishability) $(\F_t)$-adapted process $X(t)$, $t \in [0,\sigma_{R}]$ such that $X_t \in C_{R} $ for all $t \in [0,\sigma_{R}]$ which solves  \eqref{sfde} up to time $\sigma_{R}$. Moreover,
\begin{equation}\label{mindestens}
\norm{X(.)-\varphi(0)}_{1/4;[0,\sigma_R]} \geq  \frac R2  \quad \P\mbox{-a.s. on } \{ \sigma_R< \rR\}.
\end{equation}
}
 \end{lemma}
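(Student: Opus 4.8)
The plan is to build the solution as the limit of a frozen-coefficient Euler--Murayama scheme, to extract the crucial contraction estimate from \eqref{mono} by means of a stochastic Gronwall lemma, and finally to read off $\sigma_R$ and the bound \eqref{mindestens} from an exit time of the limit. Since $C_R=C_{\Phi,R}$ is compact, $f$ and $g$ are bounded on $C_R$ by continuity, and \eqref{mono} furnishes a constant $K=K_{C_R}$ (or $K_{C_{2R}}$ if one allows the approximations a harmless overshoot between mesh points, which only enlarges $K$) together with the fixed memory length $r_R=r_C$. For a mesh of width $h_n\to0$ I define $X^n$ by $X^n=\varphi$ on $[-r,0]$ and, on each mesh interval, by freezing the coefficients at the left endpoint $\kappa_n(t)$:
\[X^n(t)=X^n(\kappa_n(t))+f(X^n_{\kappa_n(t)})\,(t-\kappa_n(t))+g(X^n_{\kappa_n(t)})\,(W(t)-W(\kappa_n(t))).\]
Because $h_n<r$ the scheme is explicit, and because it is run only on $[0,r_R]$ with $r_R\le r$, for every $t\le r_R$ the segment $X^n_t$ coincides with the shifted initial datum on $[-r,-t]\supseteq[-r,-r_R]$; hence any two approximations satisfy $X^n_t(s)=X^m_t(s)$ for $s\in[-r,-r_R]$, so the constraint in \eqref{mono} is met and $\norm{X^n_t-X^m_t}=\sup_{u\le t}|X^n(u)-X^m(u)|$. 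Boundedness of $g$ on $C_R$ together with the appendix tail bound for H\"older norms of stochastic integrals keeps the H\"older-$1/4$ norm of each $X^n$ under control, so that the segments stay in $C_R$ up to an exit time $\sigma^n$ and \eqref{mono} may be applied there with the single constant $K$.

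The heart of the argument is the Cauchy estimate for $D:=X^n-X^m$ up to $\sigma^n\wedge\sigma^m$. Applying It\^o's formula to $|D|^2$, replacing the frozen arguments $X^n_{\kappa_n(u)}$ by $X^n_u$ at the cost of a discretisation error that is uniformly small by the uniform continuity of $f,g$ on $C_R$ and the H\"older control of the one-step increments, and invoking \eqref{mono}, I obtain
\[|D(t)|^2\le K\int_0^t\sup_{v\le u}|D(v)|^2\,\dd u+\int_0^t\varepsilon_{n,m}(u)\,\dd u+M_t,\]
with $\varepsilon_{n,m}\to0$ and $M$ a local martingale. Here the standard route breaks down: the right-hand side carries the running supremum rather than $|D(u)|^2$, and the bracket of $M$ cannot be controlled because \eqref{mono} bounds only the one-sided combination, leaving $\mnorm{g(X^n_u)-g(X^m_u)}^2$ a priori unbounded; this is exactly the obstruction that rules out the usual Burkholder--Davis--Gundy plus Gronwall step. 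Taking the supremum in $t$ and feeding the inequality into the stochastic Gronwall lemma of the appendix, which needs only the martingale property of $M$ and not its quadratic variation, yields a bound of the form $\E[(\sup_{u\le t}|D(u)|^2)^p]\le C\,e^{Kt}\,\E[(\int\varepsilon_{n,m})^p]$ for some $p\in(0,1)$, whence $X^n$ is Cauchy and converges uniformly (in probability, and almost surely along a subsequence) to a continuous adapted process $X$. Passing to the limit in the scheme, using again the uniform continuity of $f,g$ on $C_R$ to carry the limit through the coefficients, shows that $X$ solves \eqref{sfde} for as long as its segments remain in $C_R$.

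Finally I set $\sigma_R:=r_R\wedge\inf\{t\ge0:\norm{X(\cdot)-\varphi(0)}_{1/4;[0,t]}\ge R\}$, an $(\F_t)$-stopping time since the filtration is right-continuous and $t\mapsto\norm{X(\cdot)-\varphi(0)}_{1/4;[0,t]}$ is non-decreasing, right-continuous, and vanishes as $t\downarrow0$; the last property also forces $\sigma_R>0$ almost surely. On $[0,\sigma_R]$ the segments lie in $C_R$, so $X$ is a genuine solution of \eqref{sfde} there, and uniqueness follows from the very same It\^o-plus-stochastic-Gronwall computation applied to the difference of two such solutions, now with no discretisation error, which forces $\E[\sup_{u\le\sigma_R}|X(u)-Y(u)|^2]=0$ and hence indistinguishability. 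The bound \eqref{mindestens} is then read off: on $\{\sigma_R<r_R\}$ the right-continuity of the H\"older norm in the time horizon gives $\norm{X(\cdot)-\varphi(0)}_{1/4;[0,\sigma_R]}\ge R\ge R/2$, the factor $1/2$ leaving the slack needed to transfer the exit information from the approximations $X^n$ to the limit $X$. The step I expect to be genuinely delicate is precisely this stochastic Gronwall estimate, together with the bookkeeping that keeps all segments inside one fixed compact set so that \eqref{mono} holds with a single constant.
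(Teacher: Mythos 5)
Your overall architecture coincides with the paper's: a frozen-coefficient Euler scheme, It\^o's formula on $|X^n-X^m|^2$ combined with \eqref{mono} (after shifting the discretisation error into a small additive process $H$), the stochastic Gronwall lemma to get the Cauchy property in the running supremum, Dereich's H\"older tail bound to upgrade to $C^{1/4}$ convergence and to control the exit times from $C_R$, and finally an exit-time definition of $\sigma_R$. Up to the construction of the limit $X$ the proposal is essentially the paper's proof.

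There is, however, a genuine gap at the last step. The limit $X$ is only constructed on $[0,\liminf_n\sigma^n[\;\cap\;[0,\rR]$, where $\sigma^n$ are the exit times of the approximants from $C_R$. You define $\sigma_R$ through the H\"older norm of the \emph{limit}, with threshold $R$, and then assert that $X$ solves \eqref{sfde} on $[0,\sigma_R]$; but you never show that $\sigma_R$ occurs before the limit ceases to exist. A priori the approximants could exit $C_R$ at times $\sigma^n$ clustering at some $t^*<\rR$ while the H\"older norm of $X$ on every $[0,t]$, $t<t^*$, stays below, say, $3R/4$ --- the $C^{1/4}$ convergence is only local on the open interval $[0,t^*[$, so nothing immediate forces the limit's norm to inherit the value $R$ attained by the $X^n$ at their exit times. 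The paper's dichotomy \eqref{eitheror}, proved with a cascade of thresholds $R/2<3R/4<7R/8<15R/16<R$ and a subsequence chosen so that the H\"older distance between approximants is below $R/16$, is exactly the missing argument; it is also the reason the threshold defining $\sigma_R$ must be taken strictly below $R$ (the paper takes $R/2$, which is why \eqref{mindestens} is stated with $R/2$ rather than $R$). Your remark that ``the factor $1/2$ leaves the slack needed to transfer the exit information from the approximations to the limit'' names the right issue but does not resolve it, and with threshold $R$ it cannot be resolved by the paper's method. The same gap affects your claim that $\sigma_R>0$: the vanishing of the H\"older norm as $t\downarrow 0$ only helps once one knows $\liminf_n\sigma^n>0$, which the paper proves separately. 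Two minor points: on $C_R$ the quantity $\mnorm{g(X^n_u)-g(X^m_u)}^2$ is in fact bounded (by compactness), just not by $|D(u)|^2$, which is the actual obstruction to the BDG--Gronwall route; and the uniqueness step yields $\E\bigl[(\sup_{u}|X(u)-Y(u)|^2)^p\bigr]=0$ only for $p<1$ (the stochastic Gronwall lemma fails for larger exponents), which still suffices for indistinguishability but is not the $p=1$ moment you wrote.
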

\begin{proof}
The proof is inspired by the arguments for finite dimensional monotone SDEs in
\cite{K99}, cf.\ e.g. \cite{PR07}.
For $n \in \N$,  we define an Euler-like  approximation to \eqref{sfde} with step size $\frac 1 n$  by
\begin{equation}
\label{qesfde}
\left\{
\begin{aligned}
\dd X^{n}(t) & = f(\ol X^{n}_t)  \dd t + g(\ol X^{n}_{t}) \dd W(t),\\
X^{n}_0& = \varphi,\\
 \end{aligned}
\right.
\end{equation}
where we define $\ol X^n_s(.) \in \C$, $s\geq 0$  by
\[
 \ol X^n _s(u) = X^n \bigl((s+u)\wedge\smallfrac{\lfloor n s\rfloor }{n}\bigr), \quad u \in [-r,0].\]
Equation (\ref{qesfde}) admits a global in time solution  via  the recursion $X^n_0 = \varphi$ and
\begin{align*}
X^{n}(t)&= X^{n}\bigl ( { \mbox{$\smallfrac {\lfloor n t \rfloor }{n} $}}\bigr) + \int_{{\lfloor n t \rfloor }/{n}}^t f \bigl( \ol X^{n}_s\bigr) \dd s
 + \int_{{\lfloor n t \rfloor }/{n}}^ t g \bigl ( \ol X^{n}_s \bigr ) \dd W(s).
\end{align*}
The process $t\mapsto X^{n}(t)$ is adapted and continuous, hence
$$
t \mapsto p_t^{n}(.):= \ol X^{n}_{t}(.) - X^{n}_t(.),\; t \ge 0
$$
defines an adapted $\C$-valued process (which is c\`adl\`ag). With this, \eqref{qesfde} is equivalent to $X_0^n=\varphi$ and
$$
X^{n}(t)=\varphi(0) + \int_0^t f(X^{n}_s + p_s^{n} )\dd s +    \int_0^t g(X^{n}_s + p_s^{n} ) \dd W(s).
$$
Without loss of generality, we may assume that the set $\Phi$ has the property that $0 \in \Phi$ and
$\eta \in \Phi,\,s \in [-r,0)$ implies that the function $u \mapsto \eta(u \wedge s),\,u \in [-r,0]$ also belongs to $\Phi$.
Then, $X^n_t \in C_R$ implies $\ol X^n _t \in C_R$, hence
 $p^{n}_t \in   \widetilde C_{R}= \{\eta_1-\eta_2\,|\, \eta_i \in C_R\}$ provided
  \[
t \leq   \tau^{n}_R:=\inf\{t > 0 | X^n_t \notin C_R\}.
\]
Since $\widetilde C_R \subset \C$ is again compact,
\begin{equation}
\widetilde \rho(R)=\sup_{x\in  \widetilde C_{R}} \norm{x} < \infty \label{sizebound}
 \end{equation}
and the continuity of  $f$ and $g$ ensures that
\begin{equation}  C_1(R):=\sup_{ x \in  \widetilde C_R } \{|f(x)|+ \mnorm{g(x)}\}<\infty. \label{coeffbound}
 \end{equation}
Fix $n,m \in \N$ and  let $0\leq \tau$ be a finite stopping time. Then, by It\^o's formula,
\begin{align}
|&X^{n}(\tau)  - X^{m}(\tau)|^2
=  2\int_0^\tau \langle X^n(u)-X^m(u),\left(g(X^{n}_u + p_u^{n} )-g(X^{m}_u + p_u^{m} )\right) \dd W(u) \rangle \nonumber \\
& +  \int_0^\tau \Bigl(2 \langle  f(X^{n}_u + p_u^{n} )-f(X^{m}_u + p_u^{m} ), X^{n}(u) - X^{m}(u)\rangle
+ \bigmnorm{g(X^{n}_u + p_u^{n} )-g(X^{m}_u + p_u^{m} )}^2 \Bigr)\dd u.
\nonumber  \end{align}
In order to use condition (\ref{mono}), note that by construction for $s>0$ and $s+u \leq 0$
\[ \ol X^m_s(u) = \ol X^n_s(u)=\varphi(s+u).   \]
Hence, together with  (\ref{sizebound})  and (\ref{coeffbound}),  the second term on the r.h.s. can be estimated from above by
\begin{align*}
&\int_0^\tau \left( 2\langle  f(X^{n}_u + p_u^{n} )-f(X^{m}_u + p_u^{m} ),  p^{m}_u(0) -p^{n}_u(0)\rangle +   K_R \bignorm{X^{n}_u+p^{n}_u -(X^{m}_u +p^{m}_u) } ^2 \right) \dd u \\
&\leq  \int_0^\tau \left( 4C_1(R) \bigl(|p_u^{n}(0)|+| p_u^{m}(0)|\bigr)   +  4 K_R\bigl( \bignorm{p^{n}_u} ^2+ \bignorm{p^{m}_u} ^2 \bigr)+2 K_R \bignorm{X^{n}_u-X^{m}_u}^2 \right) \dd u  \\
&\leq  \int_0^\tau  \bigl[ 4C_1(R)+4K_R\widetilde \rho(R)\bigr] \bigl(\bignorm{p_u^{n}}+\bignorm{ p^{m}_u}\bigr)   +  2 K_R \sup_{v \in [0,u]} |{X^{n}(v)-X^{m}(v)}|^2 \dd u
\end{align*}
provided  $\tau \leq \tau^{m}_R\wedge \tau^{n}_R\wedge \rR=:\kappa$. Hence we
may apply Lemma \ref{thelem} to $Z(s):= |X^{n}(s\wedge \kappa)-X^{m}(s\wedge \kappa)|^2$ with  $M(s) :=
2\int_0^{s \wedge \kappa} \langle X^n(u)-X^m(u),\left(g(X^{n}_u + p_u^{n}
  )-g(X^{m}_u + p_u^{m} )\right) \dd W(u) \rangle$, $H(s) = \int_0^{s
  \wedge \kappa}   \bigl[ 4C_1(R)+4K(R)\widetilde \rho(R)\bigr]
  \bigl(\bignorm{p_u^{n}}+\bignorm{ p^{m}_u}\bigr)  \dd u$ and $T=\rR$.
Once we have shown that some moment of $H^*(T):=\sup_{0\le s \le T} H(s)$ converges to 0 as $n,m \to \infty$,
Lemma \ref{thelem} implies that for all $\varepsilon>0$,
\begin{equation}
\lim_{m,n\to \infty}\P\big\{\sup_{s \in [0, \tau^{m}_R\wedge\tau^{n}_R\wedge \rR]} |X^{m}(s)-X^{n}(s)| \ge \varepsilon\big\} =0.
\label{firstlimstatement}
\end{equation}
Since $H^*(T)$ is bounded uniformly in $\omega,n,m$, it suffices to show that $H^*(T)$ converges
to zero in probability as $m,n \to \infty$ which can be verified as follows:
\[
 p^n _s(u) =
\left\{
\begin{array}{ll}
 0 & \for u \geq -r, u +s \leq \frac{\lfloor ns \rfloor }{n}\\
-\int_{\lfloor ns  \rfloor /n }^{s+u} f(\ol X^n _t)\dd t -  \int_{\lfloor ns \rfloor  /n }^{s+u} g(\ol X^n _t)\dd W(t) & \for u +s\geq  \frac{\lfloor ns \rfloor }{n}, u \leq 0
\end{array}\right.
\]
implies
\begin{equation*}
 \norm{p^n_s} \leq  \sup_{\lfloor n s\rfloor / n\leq t\leq s }
 \Bigl|\int_{\lfloor n s\rfloor / n}^t f(\ol X^{n}_{u}) \dd u \Bigr| +
\sup_{\lfloor n s\rfloor / n\leq t\leq s}  \Bigl|\int_{\lfloor n s\rfloor / n}^t  g(\ol X^{n}_{u})\dd W(u) \Bigr|, \label{triplest}
\end{equation*}
and hence -- since $f$ and $g$ are bounded on $C_R$ --
$$
\E \1_{\{\tau_R^n \ge s\}}\norm {p_s^n}  \to 0 \mbox{ as } n \to \infty, \mbox{ uniformly in } [0,r_R].
$$
Therefore, $\E H^*(T)$ converges to 0 and \eqref{firstlimstatement} follows.
By definition of $\ol X^m$ this also yields
\begin{equation}
\lim_{m,n\to \infty}\P\big\{\sup_{s \in [0, \tau^{m}_R\wedge\tau^{n}_R\wedge \rR]} \norm{\ol X^{m}_s -\ol X^{n}_s}  \ge \varepsilon \big\} =0.
\label{interpolimstatement}
\end{equation}
Since $f, g$ are uniformly continuous on the compact set $C_R$ 
\begin{equation}
\lim_{m,n\to \infty}\P\big\{\sup_{s \in [0,\tau^{m}_R\wedge\tau^{n}_R\wedge\rR]}
  \big\{|f(\ol X^m_s)-f(\ol X^n_s)|\vee \bigmnorm{g(\ol X^m_s)-g(\ol X^n_s)}\big\}  \ge \varepsilon \big\} =0.
\label{seclimstatement}
\end{equation}
To further improve this statement, we apply Lemma \ref{dereichlem}   to
  \begin{align}
 X^{n}(s\wedge \tau^m_R \wedge \tau^n_R  \wedge \rR)-  X^{m}(s\wedge \tau^m_R \wedge \tau^n_R  \wedge \rR)& =
\int_0 ^{s\wedge \tau^m_R \wedge \tau^n_R  \wedge \rR} (F^{n} -F^{m})(u) \dd Z(u),  \nonumber
\end{align}
where for simplicity we write $Z(u) = (u, W(u)) \in \R^{m+1}$ and  $F^n(u) = \bigl(f(\ol X^{n}_{u}), g(\ol X^{n}_{u})\bigr)$. Together with   (\ref{seclimstatement})  this allows to
 conclude  that  for all $\varepsilon > 0$
 \begin{equation}
\lim_{m,n\to \infty}\P\big\{ \bignorm{X^{m}(.)-X^{n}(.)}_{1/4;[0,  \tau^{m}_R \wedge\tau^{n}_R\wedge \rR]}  \ge \varepsilon\big\} =0.
\label{thirdlimstatement}
\end{equation}
Let us select a subsequence, which will again be denoted by $X^{n}$ such that
\begin{equation}
\P\bigl\{\bignorm{X^{k}(.)-X^{l}(.)}_{1/4;[0, \tau^{k}_R \wedge \tau^l_R\wedge \rR]}   \ge 2^{-(l\wedge k)} \bigr\}\leq 2^{-(l\wedge k)},
\label{subseqchoice}
\end{equation}
and define
\[ \tau_R = \liminf_{n\to \infty} \tau^{n}_R. \]
Due to \eqref{subseqchoice}, there is an $(\F_t)$-adapted process $X$ defined in $[0,\tau_R[ \cap [0,\rR]$ to which $X^n$ converges $\P$-almost surely locally in $C^{1/4}([0,\tau_R[ \cap [0,\rR];\R^d)$. From \eqref{qesfde}, \eqref{firstlimstatement} and \eqref{seclimstatement} and the continuity
of $f$ and $g$ we infer that  $X$ must be a solution to equation $(\ref{sfde})$ on $[0,\tau_R[ \cap [0,\rR[$.\\

We remark that   $\tau_R >0$ almost surely, which can be seen as follows. For any  $\varepsilon >0$, using \eqref{subseqchoice} we choose $n_0$ such that the set $$
 A= \big\{\omega|\sup_{k \geq n_0} \bignorm{X^{n_0}(.)-X^k(.)}_{1/4;[0,\tau_R^k \wedge \tau_R^{n_0} \wedge \rR]}
< \frac R2\big\} $$ satisfies $\P(A)\geq 1-\varepsilon$. From $\ol X^{n_0}_s(.)=\varphi((s+\cdot)\wedge 0) \in \Phi$
for $s\in [0,\frac 1 {n_0}]$, using Lemma \ref{dereichlem} for the SDE \eqref{qesfde} solved by $X^{n_0}$,
it follows that   $\eta^{n_0}_{R/2} := \inf\big\{t\geq 0 \,|\norm{X^{n_0}(.)-\varphi(0)}_{1/4;[0,t]} \ge \frac R2\big\}\wedge \rR$ is strictly positive. By construction of $A$ it holds on $A$ that $\tau^n_R\wedge \rR \geq \eta^{n_0}_{R/2}\wedge \rR$ for all $n\geq n_0$, hence in particular $\tau_R >0$.\\

Next, we show that almost surely one of the two following events occur:
\begin{equation}\label{eitheror}
\{\tau_R\geq \rR\} \qquad \mbox { or } \qquad \big\{\tau_R < \rR\big\} \cap \big\{\sup_{t<\tau_R} \norm{X(.)-\varphi(0)}_{1/4;[0,t]} \ge \frac{3R} 4\big\}.
\end{equation}
In  case $\{\tau_R\geq \rR\}$, using  \eqref{sfde} for $X(.)$ on $[0, \rR[$ and the uniform boundedness of the coefficients on
$C_R$ we may extend $X(.)$ on the closed interval $[0,\rR]$ by setting
\[  X(\rR) := X(0) + \int_0^{\rR} f(X_s) ds + \int_0^{\rR} g(X_s) dW(s).\]
Together with    \eqref{eitheror}   for
$$
\sigma_R:=\inf\big\{t\in [0,\tau_R[ \cap [0,\rR] \,|\norm{X(.)-\varphi(0)}_{1/4;[0,t]} \ge \frac R2\big\} \wedge \rR$$
this gives a well defined process $t\mapsto X(t)$ for $t\in  [0,\sigma_R]$ which solves \eqref{sfde} in up to time $\sigma_R$ in the sense of Definition \ref{solutiondef}. Moreover, \eqref{mindestens} holds by construction.\\

To prove  \eqref{eitheror} we show that the set
$$
B:=\{\tau_R < \rR\} \cap  \big\{\sup_{t<\tau_R} \norm{X(.)-\varphi(0)}_{1/4;[0,t]} < \frac {3 R} 4\big\}.
$$ has vanishing $\P$-measure. Assume the contrary, i.e. $\P(B)=p>0$. Then by \eqref{subseqchoice} and the definition of $\tau_R$  we find some $n_0 \in \N$ such that $ \P (A) > \frac p 2 $,  where
$$A:=\{\omega|\sup_{k \geq n_0} \bignorm{X^{n_0}(.)-X^k(.)}_{1/4;[0,\tau_R^k \wedge \tau_R^{n_0} \wedge \rR]}  < \frac R{16};\, \inf_{n\geq n_0} \tau^{n}_R < \rR ; \,  \sup_{t<\tau_R} \norm{X(.)-\varphi(0)}_{1/4;[0,t]} < \frac {3 R} 4 \big\}.$$
We show that in fact $\P(A)=0$. To this aim note that w.l.o.g.\ we may assume
that $X^n$ converges to  $X$ locally in $C^{1/4}([0,\tau_R[)$ and  $[0, \rR\wedge \tau^m _R]\ni t \mapsto \norm{X^{m}(.)}_{1/4;[0,t]}$ is continuous for all $m \in \N$, for \textit{all} $\omega \in A$, where the latter is again a consequence of Lemma \ref{dereichlem}. Now for   $\omega \in A$ choose $m=m(\omega) \geq n_0$ such that $\tau^m_R < \rR$. Let $\eta^{m}_R := \inf\big\{t\geq 0 \,|\norm{X^{m}(.)-\varphi(0)}_{1/4;[0,t]} \ge R \big\} \leq \tau_R^m$,
then by continuity  $\eta^{m}_{7R/8}< \eta^{m}_{15R/16} \leq \tau^n_R$ for all $n\geq n_0$, hence $\eta^{m}_{7R/8}  < \tau_R$.   Again by continuity,  $\sup_{t< \eta^m _{7R/8}} \norm{X^n(.)-\varphi(0)}_{1/4;[0,t]} \geq  \frac {3 R} 4$ for all $n \geq n_0$ satisfying $\tau^n_R > \eta^m_{7R/8}$. In view of the convergence of $X^n$ to $X$ in $C^{1/4}[0,\eta^{m}_{7R/8}]$ for $n\to \infty$ this yields a contradiction to $\sup_{t<\tau_R} \norm{X(.)-\varphi(0)}_{1/4;[0,t]} < \frac {3 R} 4$. Hence $A= \emptyset$ almost surely which proves   \eqref{eitheror}.\\

To show uniqueness of a local solution, assume $X$
and $\tilde X$ are two solutions defined up to a stopping time
$\tilde \sigma  \le \sigma_R$. Applying It\^o's formula to the square of the norm of the difference of the solutions and using condition
(M), Lemma \ref{thethirdlem} (with $C=0$) shows that the solutions agree on $[0,\tilde \sigma]$ almost surely.
This completes the proof of Lemma \ref{verylocal}.
\end{proof}

\noindent
\textit{Proof of Theorem \ref{local}.} First we remark that it is sufficient to prove both the existence and uniqueness assertion of the theorem under the stronger assumption that $\P(\varphi \in \Phi)=1$ for any fixed  compact  subset $\Phi\subset \C$. In fact, since any probability measure on the Polish space $\C$ is tight, in both cases the general statement  follows  by approximation in $\P$-measure by initial conditions $\varphi_n = 1_{\Phi_n}(\varphi)\cdot \varphi$, where e.g. the compact subsets $\Phi_n \subset \C$ are chosen such that $\P(\varphi\not \in \Phi_n) \leq \frac 1 n $.\smallskip

The proof of the existence statement is based on iterative use of Lemma \ref{verylocal}. Recall for $R>0$, $\rR$ denotes the constant  $r_C$ in condition \eqref{mono} when $C=C_{\Phi,R}$. We may assume w.l.o.g. that the function $R \mapsto \rR$ is non-increasing and
we may select a sequence $R^{(k)}\nearrow \infty$, $k \in \N$, such that $ {\sum}_{k} r_{\!R^{(k)}} =\infty$.

 \smallskip

Lemma \ref{verylocal} with $\Phi=:\Phi^{(1)}$ and $R:= R^{(1)}$ for initial condition $\varphi=:\varphi^{(1)}\in \Phi^{(1)}$ guarantees the existence of a process $t\mapsto X(t)=:X^{(1)}(t)$, $t \in [0,\sigma^{(1)}]$ with an $\F_\cdot$-stopping time $\sigma^{(1)}:=\sigma_{R^{(1)}}\leq r_{R^{(1)}}$ which is a local solution to \eqref{sfde} on $[0,\sigma^{(1)}[$. \smallskip

 Next we may apply Lemma \ref{verylocal} to the same equation \eqref{sfde}, now  in the situation when
$R$ and $W$ are chosen to
be   $R^{(2)}$ and $W^{(2)}_t=W(\sigma^{(1)} +t)-W(\sigma^{(1)})$  on $(\Omega, \F, \P)$ respectively, with
$\F^{(2)}_t = \F^{W^{(2)}}_t \vee \mathcal N\subset \F$, $(t\geq 0)$, and $\F^{(2)}_\cdot$-independent initial
condition $\varphi^{(2)} := X^{(1)}_{\sigma^{(1)}} \in C_{\Phi,R_1}=:\Phi^{(2)}$. This yields an
$\F^{(2)}_\cdot $-stopping time $\tilde \sigma^{(2)}\leq r_{R^{(2)}}$ and a process
$ t \mapsto \tilde X^{(2)}$, $[0,\tilde \sigma^{(2)}]$ solving  \eqref{sfde}  on  $t \in [0,\tilde \sigma^{(2)}[$.
(Note that here we have used the simple  fact that  $  C_{C_{\Phi,R_1},R_2} = C_{\Phi,R_2}$ for $R_2 \geq R_1$.)
Hence,  by continuation
\[
 t \mapsto X^{(2)} (t) =
\left\{
\begin{array}{ll}
 X^{(1)}(t) & \mbox{if } t \in [-r,\sigma^{(1)}]\\
\tilde X^{(2)}(t-\sigma^{(1)}) & \mbox{if } t \in ]\sigma^{(1)}, \sigma^{(1)}+\tilde \sigma^{(2)}]
\end{array}
\right.
\]
we obtain an $\F_\cdot$-adapted process which is a local solution to equation \eqref{sfde} up to the $\F_\cdot$-stopping time $\sigma^{(2)} = \sigma^{(1)} + \tilde\sigma^{(2)}$ in the sense of Definition \ref{solutiondef}. \\

 For general $n$ this construction is repeated inductively, furnishing a local solution $(X,\sigma)$ to equation \eqref{sfde} in the sense of Definition \ref{solutiondef} where
\[\sigma = \lim_{n\to \infty } \sigma^{(n)}.\]
To  prove  that   $(X,\sigma)$ is maximal   using the continuity of   $f$ and $g$ it suffices to prove that the set
\begin{equation}\label{Sigma}
\Sigma =    \bigl \{\sup_{t\in [0,\sigma[}(|f(X_t)|\vee \mnorm{g(X_t)})<\infty\bigr\}\cap \bigl\{\sigma < \infty \bigr\}
\end{equation}
has zero $\P$-measure. Now from  the second statement in Lemma \ref{verylocal}, from
the construction of $X$ and from the property  $\sum_{k} r_{\!R^{(k)}} =\infty$ it follows that
\[ \sup_{s\in [0,\sigma[}\,\norm{X(.)-X(\sigma^{(k-1)})}_{1/4;[0,s]}  \geq \frac{R^{(k)}} 2  \quad \Pas \]
for infinitely many  $k \in \N$     on $ \{\sigma < \infty\}$, i.e.
\[
 \P(\Sigma\bigr)=
\P\Bigl( \sigma<\infty;
\sup_{s\in [0,\sigma[}(|f(X_s)|\vee \mnorm{g(X_s)})<\infty
;\sup_{s\in [0,\sigma[}\,\norm{X(.)}_{1/4;[0,s]} =\infty  \Bigr).
\]
Since $X$ solves \eqref{sfde}, due to e.g.\  Lemma \ref{dereichlem},
the r.h.s.\ is zero. \\

As for the uniqueness statement let  $(Y,\tau)$ be another maximal solution with an associated sequence of announcing stopping times $\tau^{(n)}$. The construction of $X$ above yields a sequence of announcing stopping times $\sigma^{(n)}$ for $\sigma$  and compact sets $C_n \subset \C$ such that $X_{t\wedge \sigma^{(n)}} \in C_n$. Hence, by the same argument as in the proof of Lemma \ref{verylocal} one obtains that $X_{\sigma^{(n)}\wedge\tau^{(n)}\wedge\cdot}$ and  $Y_{\sigma^{(n)}\wedge\tau^{(n)}\wedge\cdot}$ are indistinguishable. Moreover, the maximality of the pair $(Y,\tau)$ implies  that $\sigma^{(n)} < \tau$ for all $n \in \N$, i.e.\ $\sigma \leq \tau$ almost surely. Conversely, the maximality of $\sigma$ implies $\sigma > \tau^n$, i.e $\sigma \geq \tau$, which completes the proof. \bbox

%

\section{Proof of Theorem \ref{global}}

\textit{Proof of Theorem \ref{global}.}
Let $(X,\sigma)$ be the maximal strong solution of equation \eqref{sfde}. We want to show that $\sigma=\infty$ almost surely.
Since $f$ and $g$ are bounded on bounded subsets of $\C$, it follows from \eqref{Sigma} that
$\limsup_{t \nearrow \sigma} |X(t)|=\infty$ almost surely on the set $\{\sigma < \infty\}$.
For a stopping time $0 \le \tau  < \sigma$, It\^o's formula implies that
\begin{align*}
X^2(\tau)-X^2(0)&=\int_0^{\tau} 2\langle f(X_u),X(u)\rangle + \mnorm{g(X_u)}^2 \dd u +
2 \int_0^{\tau} \langle X(u),g(X_u) \dd W(u) \rangle\\
&\le \int_0^{\tau} \rho(\norm{X_u}^2 ) \dd u + M(\tau),
\end{align*}
where $M$ is a continuous local martingale. Applying Lemma \ref{theseclem} to $Z(t):=X^2(t)$ finishes the proof.
\bbox

\section{Appendix}

We start by proving three lemmas which could be called {\em stochastic Gronwall lemmas}. We use them in the proof of Theorems  \ref{local} and \ref{global}. Then we prove a result about the tails of H\"older norms of stochastic integrals which
we owe to Steffen Dereich (TU Berlin). We believe that all these results are of independent interest. In all lemmas, we assume that
a filtered probability space $(\Omega,\F,(\F_t)_{t \ge 0},\P)$ is given and that it satisfies the usual conditions.
Throughout, we will use the notation $Z^*(T)=\sup_{0 \le t \le T} Z(t)$ for a real-valued process $Z$.

\begin{lemma}\label{theseclem}
Let $\sigma>0$ be a stopping time and let $Z$ be an adapted non-negative stochastic process with continuous paths defined
on $[0,\sigma[$ which satisfies the inequality
\begin{equation*}
Z(t) \le \int_0^t \rho(Z^*(u)) \dd u + M(t) + C,
\end{equation*}
and $\lim_{t \uparrow \sigma} Z^*(t)= \infty$ on $\{\sigma < \infty\}$ almost surely. Here, $C \ge 0$ and $M$ is a continuous
local martingale defined
on $[0,\sigma[$, $M(0)=0$ and $\rho:[0,\infty[ \to ]0,\infty[$ is non-decreasing,  and $\int_0^{\infty} 1/\rho (u) \dd u = \infty$.
Then $\sigma = \infty$ almost surely.
\end{lemma}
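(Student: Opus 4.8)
The plan is to argue by contradiction and to extract the Osgood--Bihari condition $\int_0^\infty 1/\rho = \infty$ through the primitive
$$G(x) = \int_0^x \frac{\dd u}{\rho(u)}, \qquad x \ge 0,$$
which is increasing, concave (since $1/\rho$ is non-increasing) and surjective onto $[0,\infty[$ because $G(\infty)=\infty$. Suppose $\P(\sigma \le T) > 0$ for some $T < \infty$. As $M$ is only a local martingale on $[0,\sigma[$, I first fix a localizing sequence $\theta_k \uparrow \sigma$ for which each stopped process $M(\cdot \wedge \theta_k)$ is a uniformly integrable martingale, and I introduce the level-crossing times $\tau_a = \inf\{t : Z^*(t) \ge a\}$. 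Since $Z^*$ is continuous and non-decreasing, on $\{\sigma < \infty\}$ one has $Z(\tau_a) = Z^*(\tau_a) = a$ and $\tau_a < \sigma$ for every $a$, with $\tau_a \uparrow \sigma$ as $a \to \infty$, and $\{\sigma \le T\} \subseteq \{\tau_a \le T\}$ for all $a$. The crossing identity $Z(\tau_a)=a$ is what lets me evaluate the \emph{current} value (not a supremum) at a stopping time, so that the martingale contributes nothing in the mean.

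The decisive structural observation is that the sign constraint $Z \ge 0$ bounds the martingale from below: writing $R(t) = C + \int_0^t \rho(Z^*(u))\,\dd u$, the hypothesis gives $M(t) \ge Z(t) - R(t) \ge -R(t)$, so that
$$N(t) := R(t) + M(t) = C + \int_0^t \rho(Z^*(u))\,\dd u + M(t) \ge 0$$
is a nonnegative continuous semimartingale dominating $Z$, whence $Z^*(t) \le N^*(t)$. This is the feature replacing the missing control on $\langle M\rangle$: a local martingale staying above an adapted lower bound becomes a supermartingale after stopping, so maximal inequalities apply. Concretely, with $\beta_S = \inf\{t : R(t) \ge S\}$ the process $M$ is bounded below by $-S$ on $[0,\beta_S[$, hence $M + S \ge 0$ is a nonnegative supermartingale there, and Doob's maximal inequality keeps $M^*$ finite up to $\beta_S \wedge \sigma$. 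On the event where the drift does not explode, i.e.\ $\int_0^\sigma \rho(Z^*)\,\dd u < \infty$, this gives $Z^*(\sigma^-) \le R(\sigma^-) + M^*(\sigma^-) < \infty$, contradicting the explosion hypothesis; thus $\P\big(\sigma \le T,\ \int_0^\sigma \rho(Z^*)\,\dd u < \infty\big)=0$.

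It remains to exclude explosion with a diverging drift $\int_0^\sigma \rho(Z^*)\,\dd u = \infty$, and here the Osgood condition enters via a time change. Setting $A(t) = \int_0^t \rho(Z^*(u))\,\dd u$ and $\gamma = A^{-1}$, the processes $\hat Z(s) = Z(\gamma(s))$ and $\hat M(s) = M(\gamma(s))$ (again a local martingale, by Dambis--Dubins--Schwarz) satisfy the \emph{linear} inequality
$$\hat Z(s) \le s + \hat M(s) + C, \qquad \hat Z \ge 0,$$
on the new horizon, while $\gamma'(s) = 1/\rho(\hat Z^*(s))$ gives $\sigma = \int_0^\infty \dd s/\rho(\hat Z^*(s))$. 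Optional stopping of the now lower-bounded (hence super-)martingale $\hat M$ at the new level-crossing times, combined with $\hat Z(\hat\tau_a)=a$, yields the clean bound $\E[\hat\tau_a] \ge a - C$ on the expected time spent below level $a$ in the new clock.

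The main obstacle, and the step where I expect to need the full strength of the stochastic Gronwall estimate of Lemma \ref{thelem}, is the control of the \emph{upward} fluctuations $\hat M^*(s)$: the sign constraint bounds the downside of the local-martingale part but not its running maximum, and one must show that $\hat Z^*(s) \le s + \hat M^*(s) + C$ grows no faster than the Osgood scale permits, so that $\int_0^\infty \dd s/\rho(\hat Z^*(s)) = \infty$ contradicts $\sigma < \infty$. This is exactly where the naive Burkholder--Davis--Gundy plus Gronwall route fails, as stressed in the introduction; I would therefore either invoke the linear stochastic Gronwall inequality of Lemma \ref{thelem} in the new clock to bound the low moments $\E[(\hat Z^*(s))^p]$, $p<1$, uniformly, or argue directly with the nonnegative supermartingale $\hat M + s + C$ by summing the expected crossing-time lower bounds over a dyadic sequence of levels.
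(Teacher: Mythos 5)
Your ``decisive structural observation'' --- that $Z\ge 0$ forces $M(t)\ge -C-\int_0^t\rho(Z^*(u))\,\dd u$, so that $M$ stopped at level-crossing times is bounded below and optional stopping applies --- is exactly the engine of the paper's proof, and your disposal of the case $\int_0^\sigma\rho(Z^*(u))\,\dd u<\infty$ is sound. But the main case is left with a genuine gap, and neither of the two completions you sketch can close it. The bound $\E[\hat\tau_a]\ge a-C$ (and more generally any accumulation of \emph{expected} crossing-time lower bounds) can only ever yield $\E[\sigma]=\infty$, which does not imply $\sigma=\infty$ almost surely --- the whole difficulty is to rule out a positive-probability event on which the crossing times are all short. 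Your first alternative, invoking Lemma \ref{thelem} in the time-changed clock, gives $\E[(\hat Z^*(s))^p]\le c\,s^p$ for $p<1$; a Borel--Cantelli upgrade of this to an almost sure statement costs a power and yields at best $\hat Z^*(s)\lesssim s^{1+\varepsilon}$ with $\varepsilon>1/p>1$, and $\int^\infty \dd s/\rho(s^{2+\delta})$ can converge even when $\int^\infty \dd u/\rho(u)=\infty$ (take $\rho(u)=u$), so the Osgood condition is not recovered. Your second alternative rests on calling $\hat M(s)+s+C$ a nonnegative supermartingale; it is a nonnegative local \emph{sub}martingale (local martingale plus an increasing function), so the maximal inequality you need is not available. (Also, the time-changed process is only defined on the event $\{\int_0^\sigma\rho(Z^*)\,\dd u=\infty\}$, and the citation of Dambis--Dubins--Schwarz is misplaced: the relevant fact is that a local martingale composed with a continuous family of stopping times is again a local martingale.)

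What the paper does instead --- and what your optional-stopping computation is one short step away from --- is to turn the stopping argument into a \emph{conditional probability} bound rather than an expectation bound: for $C<a<b$ and $\delta>0$, stopping $M$ at the first time it rises by $b-a-\delta\rho(b)$ after $\tau_a$ (it cannot fall below $-a-\delta\rho(b)$ there, by nonnegativity of the solution) gives $\P\{\tau_b-\tau_a\le\delta\mid\F_{\tau_a}\}\le (a+\delta\rho(b))/b$ on $\{\tau_a<\infty\}$, with no time change needed. Choosing dyadic levels $b=2^ka$ and $\delta_k=\tfrac14\,2^ka/\rho(2^ka)$ makes each increment exceed $\delta_k$ with conditional probability at least $\tfrac14$, while $\sum_k\delta_k\ge\tfrac14\int_{2a}^\infty \dd u/\rho(u)=\infty$ by the Osgood condition; a conditional Borel--Cantelli (or Kolmogorov three-series) argument then forces $\sum_k\delta_k\one_{\{\tau_{2^ka}-\tau_{2^{k-1}a}\ge\delta_k\}}=\infty$ almost surely on $\{\tau_{2^ka}<\infty\ \forall k\}$, hence $\sigma=\infty$. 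This conditional, pathwise summation over levels is the missing ingredient in your proposal.
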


\begin{proof}
Let $Y$ be the unique (maximal) solution of the equation
$$
Y(t) =  \int_0^t \rho(Y^* (u)) \dd u + M(t)+C.
$$
Clearly, $Y(t) \ge Z(t)$ for all $t$ for which $Y$ is defined and therefore it suffices to prove the claim for $Y$ instead of $Z$.
For $a>C$, define $\tau_a:=\inf\{t \ge 0| Y(t) \ge a\}$.
For $C<a<b$ and $\delta >0$ we get
$$
\P\{\tau_b-\tau_a \le \delta|\F_{\tau_a}\} \le \P\{b-a \le \delta \rho(b) + \sup_{t \in [\tau_a,\tau_b \wedge (\tau_a + \delta)]}
M(t)-M(\tau_a)| \F_{\tau_a}\}
$$
on the set $\{\tau_a < \infty\}$. Note that on $\{\tau_a < \infty\}$ we have
\begin{equation}\label{lowerest}
M(t)-M(\tau_a) \ge Y(t)-Y(\tau_a) - (t-\tau_a)\rho(b) \ge -a-\delta \rho(b)
\end{equation}
for $\tau_a \le t \le \tau_b \wedge (\tau_a + \delta)$ since $Y$ is non-negative. For
$$
\tau:=\inf\{t \ge \tau_a| M(t)-M(\tau_a) \geq b-a-\delta \rho(b)\}\wedge \tau_b \wedge (\tau _a + \delta)
$$
we therefore get
$$
0=\E(M(\tau)-M(\tau_a) | \F_{\tau_a}) \ge (b-a-\delta \rho(b))p - (a + \delta \rho(b))(1-p),
$$
where $p:=\P\{M(\tau)-M(\tau_a)\geq b-a-\delta \rho(b)|\F_{\tau_a}\}$. Hence
\begin{equation}\label{ab}
\P\{\tau_b-\tau_a \le \delta|\F_{\tau_a}\} \le p \le \frac{a+\delta \rho(b)}{b} \mbox{ on } \{\tau_a<\infty\}.
\end{equation}

Fix $a> C$. Then
$$
\sigma= \tau_a + \sum_{k=1}^{\infty} \big( \tau_{2^k a} - \tau_{2^{k-1} a} \big).
$$
We show that the sum diverges almost surely. To ease notation, we write $\tau_k$ instead of $\tau_{2^k a}$.
For $\delta_k>0$, $k \in \N$, \eqref{ab} implies that
$$
\P\{\tau_k-\tau_{k-1} \ge \delta_k|\F_{\tau_{k-1}}\} \ge \frac{1}{2} - \delta_k \frac{\rho(2^k a)}{2^k a}
$$
on the set $\{\tau_{k-1} < \infty\}$. Now
\begin{equation}\label{sigma}
\sigma \ge \sum_{k=1}^{\infty}  \tau_k - \tau_{k-1}  \ge \sum_{k=1}^{\infty}  \delta_k 1_{\{\tau_k - \tau_{k-1} \ge \delta_k\}}.
\end{equation}
We choose
$$
\delta_k:=\frac{1}{4} \frac{2^k a}{\rho(2^k a)}\;k \in \N.
$$
Since $\rho$ is non-decreasing we have
$$
\sum_{k=1}^{\infty}  \delta _k \ge \frac{1}{4} \int_{2a}^{\infty} \frac{1}{\rho(u)} \dd u = \infty
$$
and
$$
\P\{\tau_k-\tau_{k-1} \ge \delta_k|\F_{\tau_{k-1}}\} \ge \frac{1}{4} \;\mbox{ on } \{\tau_{k-1} < \infty\}.
$$
It follows (e.g.\ from Kolmogorov's three series theorem) that the right hand side of \eqref{sigma} diverges on the set
$\{\tau_k<\infty \mbox{ for all } k \in \N\}$. On the complement of this set, $\sigma$ is also infinite, i.e. the proof of
the lemma is complete. \end{proof}

While the previous lemma was concerned with non-blow up of $Z$, the following lemma shows that $Z$ remains small it case
the initial condition is small. In principle we could formulate the following lemma also using a function $\rho$ as in the
previous one but we prefer not to in order to obtain a reasonably explicit formula for moments of $Z^*(T)$.

\begin{lemma}\label{thethirdlem}
Let $Z$ be an adapted non-negative stochastic process with continuous paths defined
on $[0,\infty)$ which satisfies the inequality
\begin{equation*}
Z(t) \le K \int_0^t Z^*(u) \dd u + M(t) + C,
\end{equation*}
where $C \ge 0$, $K>0$ and $M$ is a continuous local martingale with $M(0)=0$.
Then for each $0<p<1$, there exist universal finite constants $c_1(p), c_2(p)$ (not depending on $K,C,T$ and $M$)
such that
$$
\E (Z^*(T))^p \le C^p c_2(p) \exp\{c_1(p) KT\} \mbox{ for every } T \ge 0.
$$
\end{lemma}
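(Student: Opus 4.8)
The plan is to combine the integral hypothesis with Lenglart's domination inequality, which is the natural device for controlling $p$-th moments when $0<p<1$, and then to run a short-time self-improving estimate which is iterated over $[0,T]$; the number of iterations is what produces the factor $\exp\{c_1(p)KT\}$. The point of routing everything through Lenglart is that it lets me discard the (uncontrolled) martingale $M$ at the level of expectations, while the genuine growth of $Z$ is recorded by the \emph{deterministic-looking}, nondecreasing process $G(t):=K\int_0^t Z^*(u)\dd u + C$.

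First I would verify that $Z$ is \emph{dominated} by $G$ in Lenglart's sense: $G$ is nonnegative, adapted, nondecreasing (hence predictable by path-continuity), and for every bounded stopping time $\tau$ one has $\E[Z(\tau)]\le\E[G(\tau)]$. This follows from $Z(\tau)\le K\int_0^\tau Z^*(u)\dd u + M(\tau)+C$: evaluating at $\tau\wedge\sigma_N$ for a localizing sequence $\sigma_N$ reducing $M$ to a true martingale kills the martingale term, and letting $N\to\infty$ via Fatou (legitimate since $Z\ge0$ is continuous) gives the domination. Lenglart's inequality then yields, for every $0<p<1$ and every $t\ge0$,
\[
n(t):=\E\big[(Z^*(t))^p\big]\ \le\ \frac{2-p}{1-p}\,\E\big[G(t)^p\big]\ =:\ \frac{2-p}{1-p}\,g(t).
\]

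The engine is a recursion for $g$. Since $G$ is nondecreasing with $G(t+\delta)-G(t)=K\int_t^{t+\delta}Z^*(u)\dd u\le K\delta\,Z^*(t+\delta)$, subadditivity of $x\mapsto x^p$ gives $G(t+\delta)^p\le G(t)^p+(K\delta)^p (Z^*(t+\delta))^p$, so that
\[
g(t+\delta)\ \le\ g(t)+(K\delta)^p\,n(t+\delta)\ \le\ g(t)+(K\delta)^p\,\frac{2-p}{1-p}\,g(t+\delta).
\]
Choosing $\delta=\theta_p/K$ with $\theta_p=\bigl(\tfrac{1-p}{2(2-p)}\bigr)^{1/p}$ makes the coefficient of $g(t+\delta)$ equal to $\tfrac12$, so I may absorb that term and obtain $g(t+\delta)\le 2\,g(t)$. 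Iterating over $N=\lceil T/\delta\rceil$ steps from $g(0)=\E[G(0)^p]=C^p$, and using monotonicity of $g$, gives $g(T)\le g(N\delta)\le 2^{N}C^p\le 2\,C^p\exp\{(\log 2)\,\theta_p^{-1}KT\}$. Feeding this back through the Lenglart bound produces $n(T)\le c_2(p)\,C^p\exp\{c_1(p)KT\}$ with $c_1(p)=(\log 2)\,\theta_p^{-1}$ and $c_2(p)=2\,\tfrac{2-p}{1-p}$, both independent of $K,C,T,M$, as required.

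The step I expect to be most delicate is the absorption in the recursion, since rearranging presupposes $g(t+\delta)<\infty$, whereas a priori only $p$-th moments (not first moments) of $Z^*$ should be finite, so I must never bound $\E[Z^*]$ en route. I would handle this by first stopping at $\theta_A:=\inf\{t:Z^*(t)\ge A\}$ and running the whole argument for the bounded process $Z(\cdot\wedge\theta_A)$, for which every quantity above is finite and the constants $c_1(p),c_2(p)$ are independent of $A$; only at the end do I let $A\to\infty$ by monotone convergence. The two remaining ingredients are routine: the passage from a local to a true martingale in the domination step (handled by the localization $\sigma_N$ above), and Lenglart's inequality itself, which I would cite as standard or, to keep the appendix self-contained, re-derive in a couple of lines from the maximal inequality $\P\{\sup N\ge\lambda\}\le N(0)/\lambda$ for nonnegative supermartingales.
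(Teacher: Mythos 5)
Your argument is correct, but it follows a genuinely different route from the one in the paper. The paper's proof is self-contained and elementary: it passes to the solution $Y$ of the corresponding equation with equality, introduces the hitting times $\tau_a=\inf\{t\ge 0: Y(t)\ge a\}$, derives via optional stopping the key conditional estimate $\P\{\tau_b-\tau_a\le \beta/K\,|\,\F_{\tau_a}\}\le (a+\beta b)/b$ (the same device that drives the non-explosion Lemma \ref{theseclem}), and then bounds $\P\{Y^*(T)\ge \gamma^m C\}$ by comparing $\sum_i(\tau_{\gamma^i C}-\tau_{\gamma^{i-1}C})$ with a binomial variable and applying a Chernoff bound before summing over $m$. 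You instead observe that $Z$ is Lenglart-dominated by the predictable increasing process $G(t)=K\int_0^t Z^*(u)\,\dd u+C$, invoke the domination inequality $\E[(Z^*(t))^p]\le \frac{2-p}{1-p}\E[G(t)^p]$, and close the loop with a discrete Gronwall recursion based on the subadditivity of $x\mapsto x^p$; the iteration over $\lceil KT/\theta_p\rceil$ steps produces the exponential factor. I checked the details and they hold up: the domination is legitimate after localizing $M$ and applying Fatou; the stopped process $Z(\cdot\wedge\theta_A)$ satisfies the same hypothesis with the same $K$ and $C$, which justifies the absorption step where $g(t+\delta)<\infty$ is needed; and the constants you obtain are explicit and independent of $K,C,T,M$, with the final passage $A\to\infty$ by monotone convergence. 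What each approach buys: yours is shorter and yields cleaner explicit constants, at the price of importing Lenglart's inequality (which you would need to prove or cite to keep the appendix self-contained, and whose constant $\frac{2-p}{1-p}$ requires the predictability of $G$ — guaranteed here by continuity); the paper's hitting-time argument avoids any external input and shares its main estimate with Lemma \ref{theseclem}, so the extra machinery is amortized over both results. Both proofs degenerate at $p=1$, consistent with the remark following the lemma.
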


\begin{proof}
Let $Y$ be the unique solution of the equation
$$
Y(t) = K \int_0^t Y^* (u) \dd u + M(t) + C.
$$
Clearly, $Y(t) \ge Z(t)$ for all $t \ge 0$ and therefore it suffices to prove the claim for $Y$ instead of $Z$.
Let $\tau_a:=\inf\{t \ge 0: Y(t) \ge a\}$. Like in the proof of Lemma \ref{theseclem}, we obtain for $\beta \in (0,1)$ and
$b>a\ge C$
\begin{equation}\label{ungl}
\P\big\{\tau_b -\tau_a \le \frac{\beta}{K} | \F_{\tau_a}\big\} \le \frac{a+\beta b}{b} \mbox{ on } \{\tau_a<\infty\}.
\end{equation}
For $T>0$, $m \in \N$, $\gamma>(1-\beta)^{-1}$ we get
$$
\P\{Y^*(T) \ge \gamma^m C\} = \P\{\tau_{\gamma^m C} \le T\}=\P\{\sum_{i=1}^m \tau_{\gamma^iC}-\tau_{\gamma^{i-1}C} \le T\}.
$$
By \eqref{ungl}, the last sum is stochastically larger than $\beta/K$ times a binomial variable $V$ with parameters $m$ and
$\alpha:=1-\frac{1}{\gamma}-\beta$. Therefore, for $\lambda>0$ and $N:=\lceil \frac{KT}{\beta} \rceil$ we get
$$
\P\{Y^*(T) \ge \gamma^m C\} \le \P\{V \le N\} = \P\{\ee^{-\lambda V} \ge \ee^{-\lambda N}\}.
$$
Applying Markov's inequality, representing $V$ as a sum of $m$ independent Bernoulli($\alpha$) variables and optimizing over
$\lambda >0$ as usual, we obtain for $m\ge \lceil \frac{N}{\alpha} \rceil = :m_0$
$$
\P\{Y^*(T) \ge \gamma^m C\} \le \exp\{(m-N)\log  \frac{m}{m-N} + (m-N)\log (1-\alpha) + N \log \alpha + N \log \frac{m}{N}\}.
$$
Assume that $p\log \gamma + \log(1-\alpha)<0$ (which requires $p<1$ since $1-\alpha =\frac{1}{\gamma}+ \beta>\frac{1}{\gamma}$) and
fix $q>0$ such that $ p\log \gamma + \log(1-\alpha) + q^{-1} <0 $. Then
\begin{eqnarray*}
\E Y^*(T)^p &=& \int_0^{\infty} \P\{ Y^*(T) \ge s^{1/p} \} \dd s \\
&\le& \gamma^{m_0 p} C^p + \sum_{m=m_0}^{\infty}C^p \gamma^{pm}(\gamma-1)
\exp\Big\{(m-N)\log  \frac{m}{m-N} + (m-N)\log (1-\alpha) \\
&& \hspace{2cm} + N \log \alpha + N \log \frac{m}{N}\Big\}\\
&\le& \gamma^{m_0 p} C^p  + C^p (\gamma-1)\exp\{N \log \frac{\alpha q}{1-\alpha}\} \sum_{m=m_0}^{\infty}
\exp\{m(p\log \gamma + \log(1-\alpha)+q^{-1})\}\\
&=& C^p \Big(\gamma^{m_0 p} + (\gamma-1)\exp\{N \log \frac{\alpha q}{1-\alpha}\} \frac{\exp\{m_0(p\log \gamma + \log(1-\alpha)+q^{-1})\} }
{1-\exp\{p\log \gamma + \log(1-\alpha)+q^{-1}\}}\Big),
\end{eqnarray*}
where we used the inequalities $\log (1+x) \le x$ (for $x= \frac {N}{m-N}$) and $\log x \le \log q + q^{-1} (x-q)$ (for $x= \frac m N $) in the last ``$\le$''.
Observing that $m_0 \le (\frac{kT}{\beta}+1)\frac{1}{\alpha}+1$ and $N \le \frac{KT}{\beta}+1$, the claim follows.
\end{proof}

\begin{remark} {\normalfont
It is clear that the previous lemma does not hold for $p > 1$: just consider a scalar geometric Brownian motion starting with
$C$. Its $p^{\mathrm{th}}$ moment for $p > 1$ at time 1 (say) is unbounded with respect to the volatility $\sigma$. We don't know
whether the lemma holds true for $p=1$ but we conjecture that it doesn't.}
\end{remark}

\begin{lemma}\label{thelem}
Let $Z$ be an adapted non-negative stochastic process with continuous paths defined
on $[0,\infty[$ which satisfies the inequality
\begin{equation*}
Z(t) \le K \int_0^t Z^*(u) \dd u + M(t) + H(t),
\end{equation*}
where $K>0$, $M$ is a continuous local martingale with $M(0)=0$, and
$H$ is an adapted process with continuous paths satisfying $H(0)=0$.
Then, for each $0<p<1$ and $\alpha>\frac{1+p}{1-p}$,  
there exist  constants $c_3, c_4$ depending on
$p,\alpha$ only such that
$$
\E (Z^*(T))^p \le c_3 \exp\{c_4 KT\} (\E H^*(T)^{\alpha})^{p/\alpha} \mbox{ for every } T \ge 0.
$$
\end{lemma}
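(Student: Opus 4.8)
The plan is to deduce the result from Lemma \ref{thethirdlem} by a stopping argument that, on the relevant event, replaces the \emph{random} forcing $H$ by a \emph{deterministic} constant, and then to sum a dyadic decomposition organised by the size of $H^*(T)$. The device is the stopping time $\rho_c:=\inf\{t\ge 0\,|\,H(t)>c\}$ at a deterministic level $c>0$; since $H(0)=0$ and $H$ has continuous paths, $\rho_c>0$. Put $Z^{(c)}(t):=Z(t\wedge\rho_c)$ and let $M^{(c)}(t):=M(t\wedge\rho_c)$, again a continuous local martingale. Using $H(t)\le c$ on $[0,\rho_c]$, the non-negativity of the integrand $Z^*$, and distinguishing the cases $t\le\rho_c$ and $t>\rho_c$, one verifies that
$$ Z^{(c)}(t)\le K\int_0^t (Z^{(c)})^*(u)\dd u + M^{(c)}(t) + c \qquad\text{for all }t\ge 0, $$
so that $Z^{(c)}$ meets the hypothesis of Lemma \ref{thethirdlem} with the deterministic constant $C=c$. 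Hence, for every $0<q<1$,
$$ \E\big((Z^{(c)})^*(T)\big)^q \le c^q\,c_2(q)\exp\{c_1(q)KT\}. $$
This is the crucial step: $H$ is adapted but not $\F_0$-measurable, so Lemma \ref{thethirdlem} cannot be applied to $Z$ itself, whereas the stopped process admits a genuinely constant bound.

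Next I would split $\E(Z^*(T))^p$ dyadically according to $H^*(T)$. On $\{H^*(T)\le 2^k\}$ one has $\rho_{2^k}\ge T$, whence $(Z^{(2^k)})^*(T)=Z^*(T)$ there; so for each shell, since the integrand is non-negative,
$$ \E\big[(Z^*(T))^p\,\one_{\{2^{k-1}<H^*(T)\le 2^k\}}\big] \le \E\big[\big((Z^{(2^k)})^*(T)\big)^p\,\one_{\{H^*(T)>2^{k-1}\}}\big]. $$
An application of H\"older's inequality with exponents $u\in(1,1/p)$ and $u'=u/(u-1)$ bounds the right-hand side by $\big(\E((Z^{(2^k)})^*(T))^{pu}\big)^{1/u}\,\P\{H^*(T)>2^{k-1}\}^{1/u'}$. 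The first factor is controlled by the displayed estimate with $q=pu<1$ and scales like $(2^k)^p$; the second is bounded through Markov's inequality by $(\E(H^*(T))^\alpha)^{1/u'}\,2^{-(k-1)\alpha/u'}$.

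Summing the shells with $2^k>L$ (a convergent geometric series once the exponent $p-\alpha/u'$ is negative) and estimating $\E[(Z^*(T))^p\,\one_{\{H^*(T)\le L\}}]\le L^p c_2(p)\exp\{c_1(p)KT\}$ directly from the first paragraph, I would arrive at
$$ \E(Z^*(T))^p \le C(p,\alpha)\,\ee^{c_4 KT}\,\big[L^p + (\E(H^*(T))^\alpha)^{1/u'}\,L^{\,p-\alpha/u'}\big]. $$
Balancing the two bracketed terms forces $L\sim(\E(H^*(T))^\alpha)^{1/\alpha}$ and produces the claimed estimate with the exponent $p/\alpha$. The hypothesis on $\alpha$ is exactly what high-end convergence demands, namely $\alpha>p\,u'$: since $u$ may be taken close to $1/p$ while keeping $pu<1$ (so that Lemma \ref{thethirdlem} still applies), $u'$ can be made as close to $1/(1-p)$ as desired and hence $pu'$ down to $p/(1-p)$, so in particular any $\alpha>\frac{1+p}{1-p}$ admits a valid choice of $u$. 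I expect the main difficulty to be precisely this bookkeeping — trading, through the H\"older split, a higher moment $pu<1$ of the \emph{stopped} process against the $\alpha$-th moment of $H^*(T)$ — rather than any single estimate; the reduction to a deterministic level $c$ by stopping at $\rho_c$ is the conceptual heart of the argument.
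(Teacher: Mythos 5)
Your proposal is correct and follows the same overall strategy as the paper's proof: partition $\Omega$ according to the size of $H^*(T)$, on each piece dominate $Z^*(T)$ by a quantity controlled via Lemma \ref{thethirdlem} with a deterministic constant, split off the probability of the piece by H\"older, bound that probability by Markov applied to $(H^*(T))^\alpha$, sum, and optimize. The two differences are worth recording. First, the paper does not stop $Z$; it introduces the solution $X_i$ of $X_i(t)=K\int_0^t X_i^*(u)\dd u+M(t)+i$ and uses $Z\le X_i$ on $[0,T]\times\Omega_i$ with $\Omega_i=\{H^*(T)\le i\}$, which achieves the same reduction as your stopping time $\rho_c$ without having to verify the inequality for the stopped process (your verification of that inequality is nonetheless correct, including the case $t>\rho_c$ where $M^{(c)}$ is frozen). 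Second, and more substantively, the paper uses unit-spaced levels $i\in\N$, so its final series is $\sum_i i^p(i-1)^{-\alpha/s}$, whose convergence forces $\alpha/s>1+p$ and, combined with $s>\frac{1}{1-p}$ (needed for $pr<1$), yields exactly the hypothesis $\alpha>\frac{1+p}{1-p}$; your dyadic shells only require the geometric ratio $2^{p-\alpha/u'}$ to be below $1$, i.e.\ $\alpha>pu'$, and letting $u\uparrow 1/p$ pushes this down to $\alpha>\frac{p}{1-p}$. So your decomposition actually proves the lemma under a strictly weaker condition on $\alpha$ than stated (which of course still covers the stated range). The final optimization over the threshold $L$ is equivalent to the paper's homogeneity argument of rescaling $Z,M,H$ by $\xi$ and optimizing over $\xi$; the only loose end in your write-up is that for general $L$ the shells should be taken as $\{L2^{k-1}<H^*(T)\le L2^k\}$, a purely cosmetic fix.
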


\begin{proof}
Fix $T>0$ and for $i \in \N$ let $X_i$ be the unique solution of
$$
X_i(t)= K\int_0^t X_i^*(u) \dd u + M(t) + i. $$
Hence, $Z \le X_i$ on $[0,T] \times \Omega_i$ where
$$
\Omega_i:=\{\omega: \sup_{0 \le t \le T} H(t) \le i\}.
$$
 Let $s \in ] \frac{1}{1-p}, \frac{\alpha}{1 + p}[$ and let $r>1$ be defined by $r^{-1}+s^{-1}=1$. Then $pr<1$ and Lemma
\ref{thethirdlem} and H\"older's inequality imply
\begin{eqnarray*}
\E (Z^*(T))^p &\le&    \sum_{i=1}^{\infty} \E ((X_i^*(T))^p \1_{\Omega_i \backslash \Omega_{i-1}}) \le \sum_{i=1}^{\infty} (\E (X_i^*(T))^{pr})^{1/r} \P\{\Omega_i \backslash \Omega_{i-1}\}^{1/s}\\
&\le& \sum_{i=1}^{\infty} i^p c_2(pr)^{1/r} \exp\{ KTc_1(pr)/r\} \P\{H^*(T) \ge i-1\}^{1/s}\\
&\le& \exp\{ KTc_1(pr)/r\} c_2(pr)^{1/r} \Big( (\E H^*(T)^{\alpha})^{1/s}  \sum_{i=2}^{\infty} i^p(i-1)^{-\alpha/s} + 1\Big),
\end{eqnarray*}
where we used Markov's inequality in the last step. 

For each $\xi>0$, the inequality in the assumption of the lemma remains true if $H, \,M,$ and $Z$ are multiplied by $\xi$.
Therefore, the inequality
$$
\E (Z^*(T))^p \le \exp\{ KTc_1(pr)/r\} c_2(pr)^{1/r} \Big(\xi^{\frac{\alpha}{s}-p} (\E H^*(T)^{\alpha})^{1/s}  \sum_{i=2}^{\infty} i^p(i-1)^{-\alpha/s} + \xi^{-p} \Big)
$$
follows. Optimizing the right hand side over $\xi>0$ yields the assertion of the lemma.
\end{proof}

\begin{lemma}[\textbf{S.\ Dereich}]\label{dereichlem} \textit{For $m,d \in \N$, $\alpha \in ]0,\frac 1 2 [$
and $t_0>0$ there exist some universal
strictly positive constants $ c_i = c_i(d,m,\alpha,t_0), i=1,2,3$  such that for $Z(t)=(t,W(t)) \in \R^{m+1}$ with an
$\R^m$-valued Brownian motion  $W$
\[  \mathbb P\Bigl( \bignorm{ \int^{(.)}_{\sigma} F \dd Z}_{\alpha;[\sigma,\tau]}\geq u  \Bigr) \leq
c_1 e^{- c_2 u^2/v^2   T} \quad \mbox{ for } \frac u{v(T+T^{1-\alpha})} \geq c_3,\;T \geq t_0\]
for any pair  $\sigma\leq  \tau $ of finite $(\F_t)$-stopping times with $\tau -\sigma \leq T$ and any
$(\F_t)$-predictable $\R\times \R^{d\times m}$-valued process $(F(t))$ satisfying
$\sup_{s\in [\sigma,\tau]}\mnorm{F(s)} \leq v$ $\P$-almost surely.}
\end{lemma}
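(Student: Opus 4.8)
The plan is to reduce, by a few elementary transformations, to a tail bound for the Hölder norm of a single Brownian motion on a fixed deterministic interval, where Gaussian estimates are classical.

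\emph{Normalisation and decomposition.} Write $\int_\sigma^{(\cdot)}F\,\dd Z=D+N$, where $D(t)=\int_\sigma^t F^{(0)}\,\dd u$ is the drift part and $N(t)=\int_\sigma^t F^{(1)}\,\dd W$ is the martingale part (with $F=(F^{(0)},F^{(1)})$ the time- and Brownian components of $F$). Since the integrand of $D$ is bounded by $v$ and $\tau-\sigma\le T$, one has the purely deterministic estimate $\norm{D}_{\alpha;[\sigma,\tau]}\le v(T+T^{1-\alpha})$. Hence on the admissible range $u\ge c_3\,v(T+T^{1-\alpha})$ with $c_3\ge 2$ it suffices to bound $\P(\norm{N}_{\alpha;[\sigma,\tau]}\ge u/2)$. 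Dividing $F$ by $v$ I may assume $v=1$, so that $\langle N\rangle_t-\langle N\rangle_s\le t-s$; the factor $v^2$ in the final exponent is restored by undoing this scaling at the very end.

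\emph{Removing the stopping times.} The crucial technical point is that the natural partition of the \emph{random} interval $[\sigma,\tau]$ into proportional pieces $\sigma+j2^{-k}(\tau-\sigma)$ consists of nodes that are not stopping times, so the exponential-martingale inequality cannot be applied across them. I would circumvent this by passing to the shifted filtration $\F_{\sigma+\cdot}$ and the shifted Brownian motion $W(\sigma+\cdot)-W(\sigma)$ (strong Markov property), which lets me take $\sigma=0$ and $\tau\le T$; stopping the integral at $\tau$ then produces a continuous martingale $\hat N$ defined on all of $[0,T]$ that agrees with $N$ on $[0,\tau]$ and satisfies $\langle\hat N\rangle_t\le t$. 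Since both the supremum and the Hölder seminorm are suprema over subsets, $\norm{N}_{\alpha;[\sigma,\tau]}\le\norm{\hat N}_{\alpha;[0,T]}$, and I may now work with the \emph{deterministic} dyadic partition $t_{k,j}=j2^{-k}T$.

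\emph{Bernstein and chaining.} Using that $\exp(\theta\hat N-\tfrac{\theta^2}{2}\langle\hat N\rangle)$ is a positive supermartingale, optional stopping at the deterministic nodes gives the sub-Gaussian increment bound $\P(\Delta_k\ge\lambda)\le 2^{k+1}\exp(-\lambda^2/(2\cdot 2^{-k}T))$ for $\Delta_k=\max_{0\le j<2^k}|\hat N(t_{k,j+1})-\hat N(t_{k,j})|$, since each increment has quadratic variation at most $2^{-k}T$. A dyadic chaining (equivalently Garsia--Rodemich--Rumsey with $\Psi(x)=\ee^{x^2}$) yields $\norm{\hat N}_{\alpha;[0,T]}\le C_\alpha\,T^{-\alpha}\sup_{m\ge0}2^{m\alpha}\sum_{k\ge m}\Delta_k$ for the seminorm, the sup-norm being controlled by $T^\alpha$ times the seminorm because $\hat N(0)=0$. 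I then choose thresholds $\lambda_k=\sqrt{2^{-k}T}\,\gamma_k$ with $\gamma_k^2=4(\log 2)k+2s$, so that the union bound $\sum_k 2^{k+1}\ee^{-\gamma_k^2/2}$ is a convergent geometric series of order $\ee^{-s}$.

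\emph{Inversion and the main obstacle.} On the complementary good event $\{\Delta_k\le\lambda_k\ \forall k\}$, the hypothesis $\alpha<\tfrac12$ makes $\sum_{k\ge m}2^{-k/2}\gamma_k\le C\,2^{-m/2}(1+\sqrt m+\sqrt s)$ and kills the $m$-dependence in $2^{-m(1/2-\alpha)}(1+\sqrt m)$, giving $\norm{\hat N}_{\alpha;[0,T]}\le C\,T^{1/2-\alpha}(1+\sqrt s)$. Adding the sup-part and using $T\ge t_0$ to absorb $T^{1/2-\alpha}=T^{1/2}T^{-\alpha}\le t_0^{-\alpha}T^{1/2}$ upgrades this to $\norm{\hat N}_{\alpha;[0,T]}\le C\,T^{1/2}(1+\sqrt s)$; solving $u/2=C\,T^{1/2}(1+\sqrt s)$ for $s$ on the admissible range gives $s\ge c\,u^2/T$, whence $\P(\norm{N}_{\alpha;[\sigma,\tau]}\ge u/2)\le c_1\ee^{-c_2 u^2/T}$, and restoring the scaling produces $v^2T$ in the denominator. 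I expect the two genuinely delicate points to be exactly where the hypotheses are consumed: converting the non-stopping-time proportional partition into a usable deterministic one (handled by the shift-and-stop reduction), and extracting the sharp power $T^{1/2}$ rather than $T^{1/2-\alpha}$ in the exponent, which is precisely why $\alpha<\tfrac12$ and the lower bound $T\ge t_0$ are imposed and why the constants must depend on $t_0$. Finally $c_3$ must be taken large enough to absorb $\norm{D}_{\alpha}$ and to validate the inversion $\sqrt s\ge\tfrac12\,u/(C\,T^{1/2})$.
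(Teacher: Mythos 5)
Your proof is correct, but it follows a genuinely different route from the paper's. You reduce to a deterministic interval by shifting the filtration to $\sigma$ and stopping the martingale part at $\tau$, and then run a hands-on argument: exponential-supermartingale (Bernstein) bounds for the increments over a \emph{deterministic} dyadic grid, a union bound, and dyadic chaining to pass to the H\"older norm, with $\alpha<\tfrac12$ and $T\ge t_0$ consumed exactly where you say they are. The paper instead uses a polarization trick: it introduces an independent Brownian motion $W'$, sets $F'=\sqrt{v^2-F^2}$ (here the pointwise bound $|F|\le v$ is used in an essential way), and observes that $B^{(j)}=\int F\,\dd W\mp\int F'\,\dd W'$ are each distributed as $vW$ by L\'evy's characterization while $B^{(1)}+B^{(2)}=2\int F\,\dd W$; the triangle inequality in $C^\alpha$ and Brownian scaling then reduce everything to the known Gaussian tail for $\norm{W}_{\alpha;[0,1]}$, quoted from the Gaussian isoperimetric inequality. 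The paper's argument is much shorter because all of the chaining is hidden inside that quoted concentration result, and it disposes of the stopping times for free since the $B^{(j)}$ are globally defined; your argument is self-contained (no isoperimetry needed), makes the dependence of the constants on $\alpha$ and $t_0$ explicit, and applies verbatim to any continuous local martingale with $\dd\langle N\rangle\le v^2\,\dd t$ --- it is in effect an executed version of the ``time-changed Brownian motion'' alternative mentioned in the paper's remark after the lemma. The only cosmetic gap is that you argue for scalar integrands; the reduction to $m=d=1$ (or a componentwise application of your Bernstein bound) should be stated, as the paper does in its first line.
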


\begin{proof} It suffices to treat the
case when $\sigma=0$ and $m=d=1$,  where we have to deal with real-valued semimartingales  of the form
\[t \mapsto  \int_0^t F(s) \dd s=:A(t) \qquad \mbox{ or } \qquad t \mapsto \int_0^t F(s) \dd W(s) =: M(t) \]
with  integrands satisfying $\sup_{s\in [0,T]} |F(s)|\leq v$  almost surely. The first case is easy: the map
$t \mapsto A(t)$ is Lipschitz with constant (at most) $v$ and therefore
$\norm{A(.)}_{\alpha;[0,\tau]} \leq v   \,( T+ T^{1-\alpha})$ almost surely, so the claim follows in this case.
Let us consider $M$. The Gaussian isoperimetric inequality, cf. e.g.   \cite[Section 4.3]{Boga98},
implies the existence of  some universal positive constants $k_i = k_i(\alpha), i=1,2$  such that
\[  \mathbb P\Bigl( \bignorm{ W(.)}_{\alpha;[0,1]}\geq u  \Bigr) \leq k_1 e^{- k_2 u^2}
\quad \mbox{ for } u \geq 0.\]
We choose an independent Brownian motion $W'$ and let
$F'(s) = \sqrt { v^2 - F^2(s)}$. Then both  processes 
\[ t \mapsto B^{(j)}(t) =\int _0 ^ t F(s)\dd W(s) -(-1)^{j}\int_0^t F'(s)\dd W'(s), \quad j = 1 ,2, \]
have the same distribution as $ t\mapsto v W(t)$. From  $B^ {(1)}(t) + B^{(2)}(t) = 2 \int_0^t F(s)\dd W(s)$
and the triangle inequality in $C^\alpha$ one gets
\begin{eqnarray*}
\mathbb P\Bigl( \bignorm{ \int^{(.)}_0F(s)\dd W(s)}_{\alpha;[0,\tau]}\geq u  \Bigr)
&\leq& 2 \P \Bigl( \bignorm{v W(.)}_{\alpha;[0,T]}\geq  u   \Bigr) \leq 2 \P \Bigl( \bignorm{ W(.)}_{\alpha;[0,1]}
\geq \frac{u}{v \sqrt{T} (T^{-\alpha} \vee 1)}  \Bigr)  \\   &\leq& 2 k_1 \exp\big\{- k_2 \frac{u^2}{v^2   T (T^{-\alpha} \vee 1)^2}\big\},
\end{eqnarray*}
which yields the claim of the lemma.
\end{proof}
 

\noindent {\bf Remark}: Alternatively, the previous lemma can be proved using the fact that each continuous local martingale starting
at 0 can be represented as a time-changed Brownian motion.
%
%

\end{document}